\DeclareMathOperator{\GL}{GL}
\DeclareMathOperator{\Lie}{Lie}
\DeclareMathOperator{\Cent}{Cent}
\DeclareMathOperator{\Spec}{Spec}
\DeclareMathOperator{\Gm}{{\mathbf G}_m}
\DeclareMathOperator{\ZZ}{{\mathbb Z}}
\DeclareMathOperator{\et}{\text{\it \'et}}
\newtheorem{lem}{Lemma}[section]
\newtheorem*{thm*}{Theorem}
\newtheorem{thm}[lem]{Theorem}
\newtheorem{cor}[lem]{Corollary}
\theoremstyle{definition}{    }
\theoremstyle{definition}{    }
\theoremstyle{definition}{  \newtheorem{conj}[lem]{Conjecture} }
\begin{document}

\title{On a theorem of Harder}

\author{Ivan Panin}
\address{St. Petersburg Department of Steklov Mathematical Institute, nab. r. Fontanki 27, 191023, St. Petersburg, Russia}
\email{paniniv@gmail.com}

\author{Anastasia Stavrova}\address{
St. Petersburg Department of Steklov Mathematical Institute, nab. r. Fontanki 27, 191023, Saint Petersburg, Russia
}

\email{anastasia.stavrova@gmail.com}

\maketitle

\section{Introduction}

In 1967 G\"unter Harder proved the following theorem.

\begin{thm}\cite[Satz 3.3]{Har67}\label{th:harder}
Let $G$ be a simply connected semisimple group over a Dedekind domain $A$. Assume that $G$
is quasi-split over the  fraction field of $A$. Then $H^1_{Zar}(A,G)=1$.
\end{thm}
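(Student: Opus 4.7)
I would prove the theorem by patching, combined with the special structure of simply connected quasi-split groups.

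\emph{Patching setup.} Let $\mathcal{E}$ be a Zariski-locally trivial $G$-torsor on $\Spec(A)$. Since $\Spec(A)$ is Noetherian of dimension one, I may trivialize $\mathcal{E}$ on some dense affine open $U = \Spec(A) \setminus \{m_1, \dots, m_r\}$, as well as on each local ring $A_{m_i}$ (a Zariski-locally trivial torsor over a local ring being automatically trivial, since any Zariski cover of $\Spec(A_{m_i})$ contains the entire scheme). This gives transition elements $g_i \in G(K)$, where $K = \mathrm{Frac}(A)$; the torsor $\mathcal{E}$ is globally trivial if and only if there exists $\alpha \in G(U)$ with $\alpha^{-1} g_i \in G(A_{m_i})$ for every $i$.

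\emph{Reduction via Bruhat.} Using that $G_K$ is quasi-split, fix a Borel $B = T \cdot U^+ \subset G_K$. The big Bruhat cell $\Omega = U^- \cdot T \cdot U^+$ is open dense in $G_K$, and each $G(A_{m_i})$ is Zariski-dense in $G_K$ by smoothness, so after right-multiplying each $g_i$ by a suitable element of $G(A_{m_i})$ I may assume $g_i \in \Omega(K)$. Factoring $g_i = u_i^- t_i u_i^+$ with $u_i^\pm \in U^\pm(K)$ and $t_i \in T(K)$, the original splitting problem decomposes into three simultaneous splitting problems over $U^-$, $T$, and $U^+$.

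\emph{Induced tori and the main obstacle.} For $U^\pm$, whose root subgroups are restrictions of scalars of $\Ga$, the splitting is immediate from the vanishing of $H^1_{Zar}$ for vector groups over Dedekind domains. The crucial case is $T$: because $G$ is simply connected, the cocharacter lattice $X_*(T)$ is the coroot lattice $Q^\vee$, which is a permutation $\Gal(K^s/K)$-module (the Galois action permutes the basis of simple coroots via its action on the Dynkin diagram). Hence $T \cong \prod_j R_{L_j/K}\Gm$ is an induced torus, and the splitting for $T$ reduces to a divisor-theoretic statement in the integral closure $B_j$ of $A$ in $L_j$: given prescribed valuations at the primes of $B_j$ above $\{m_1,\dots,m_r\}$, find an element of $L_j^*$ with those valuations that is a unit elsewhere. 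The main obstacle is handling the possibly non-trivial $\Pic(B_j)$: one must arrange for the relevant divisor classes to be principal, which may require enlarging $\{m_1, \dots, m_r\}$ to kill specific Picard classes—this is compatible with the patching setup, since the torsor is already trivial on the newly added points.
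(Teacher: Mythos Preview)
Your approach diverges from both Harder's original argument and the paper's. Harder deduced the result from strong approximation for $G_L$; the paper instead proves the stronger Theorem~\ref{th:main} by passing to the $f$-adic completion $\hat A$, trivializing there via the Grothendieck--Serre conjecture for discrete valuation rings, and then using the decomposition $G(\hat L_i)=G(\hat A_i)E_P(\hat L_i)$ together with $E_P(\hat L)\subseteq G(\hat A)E_P(A_f)$ to absorb the patching element. Neither argument attempts to reduce to a Borel or to the maximal torus.

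Your reduction to $T$ has a genuine gap, and the proposed fix does not close it. Suppose you enlarge the bad set by adding primes $m_{r+1},\ldots,m_s$ so that the relevant divisor becomes principal on $\Spec B_j\setminus\{\text{primes above }m_1,\ldots,m_s\}$. At each new point $m_i$ the torsor was already trivial via the restriction of the $U$-trivialization, so the new transition element lies in $G(A_{m_i})$; after your Bruhat reduction it lies in $\Omega(A_{m_i})$, hence its $T$-component lies in $T(A_{m_i})$. The splitting condition for $T$ at $m_i$ then forces $\alpha_T\in T(A_{m_i})$, i.e.\ $\alpha_T$ must still be a unit at every prime of $B_j$ above $m_i$. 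Thus enlarging gains you no freedom: the divisor you must make principal is unchanged. The obstruction is genuinely $\Pic(B_j)$, and since $H^1_{Zar}(A,T)\cong\bigoplus_j\Pic(B_j)$ can be nonzero, your argument as written would also ``prove'' $H^1_{Zar}(A,T)=1$, which is false.

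What is missing is precisely the step where one uses that a $T$-torsor may become trivial as a $G$-torsor --- this requires the unipotent elements $x_{\pm\alpha}(\cdot)$ to interact with $T$ in a way that the product decomposition $\Omega\cong U^-\times T\times U^+$ does not see. (Already for $G=\SL_2$ this step is Steinitz's theorem that $L\oplus L^{-1}\cong A^2$ for any line bundle $L$ over a Dedekind domain.) Relatedly, the assertion that the global splitting problem ``decomposes into three simultaneous splitting problems'' is not justified: the big cell is not a subgroup, so factoring $g_i$ and $\alpha$ separately does not factor the equation $g_i\alpha\in G(A_{m_i})$. The paper's use of the elementary subgroup $E_P$ in place of $T$ is exactly what circumvents this obstacle.
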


Our aim is to generalize this theorem to isotropic simply connected semisimple groups.

We use the following definition of an isotropic group.
Let $G$ be a reductive group (scheme) in the sense of~\cite{SGA3} over a commutative ring $R$.
A parabolic $R$-subgroup $P$ of $G$ is called {\it strictly proper}, if for each ring homomorphism
$s:R\to\Omega(s)$, where
$\Omega(s)$ is an algebraically closed field,
the type of the parabolic subgroup $P_{\Omega(s)}$ in the sense of~\cite[Exp. XXVI, \S 3.2]{SGA3}
does not contain any connected component of the Dynkin diagram of $G_{\Omega(s)}$.
We will say that
$G$ is {\it strictly isotropic}, if it contains a strictly proper parabolic $R$-subgroup.

Our main result is the following theorem.

\begin{thm}\label{th:main}
Let $A$ be a Dedekind domain.
Let $G$ be a simply connected
semisimple group over $A$. Assume that $G$ is strictly isotropic over the fraction field $L$ of $A$.
Then the natural map
$$
H^1_{\et}(A,G)\to H^1_{\et}(L,G)
$$
has trivial kernel.

In other words, let $E$ be a $G$-torsor over $A$. If $E$ is trivial over $L$, then $E$ is trivial.
\end{thm}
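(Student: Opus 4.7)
The plan is to reduce the structure group of $E$ progressively through the strictly proper parabolic and its Levi, then invoke Harder's theorem on the quasi-split inner form of the derived Levi.

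First, extend $P_L$ to a parabolic $P$ of $G$ over $A$. The scheme $\mathrm{Par}_t(G)$ of parabolics of type $t = \mathrm{type}(P_L)$ is smooth and projective over $A$; the $L$-point furnished by $P_L$ extends to an $A$-point by applying the valuative criterion of properness at each $A_\mathfrak{m}$ and gluing via separation. Strictness is a condition on the (locally constant) type, so the resulting $A$-parabolic $P$ is strictly proper.

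Next, reduce the structure group of $E$: the twisted flag variety $E/P$ is a form of the projective scheme $G/P$ and hence proper over $A$. The trivialization $E_L \cong G_L$, combined with the canonical $L$-point $P_L/P_L$ of $G_L/P_L$, supplies an $L$-point of $E/P$; by the same valuative-criterion-plus-gluing argument, this extends to an $A$-point, producing a reduction of $E$ to a $P$-torsor $F$ with $F_L$ trivial. Using the Levi decomposition $P = L_P \ltimes U_P$ and the vanishing $H^1_\et(A, U_P) = 0$ (which follows because $U_P$ is a successive extension of vector groups and coherent cohomology on the affine $\Spec A$ vanishes), $F$ descends further to an $L_P$-torsor $F'$, still trivial over $L$.

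Finally, show that $F'$ is trivial. The derived subgroup $L_P^\der$ is simply connected semisimple (since $G$ is), the quotient $T = L_P / L_P^\der$ is a torus, and the long exact sequence separates the problem into a derived part and a toral part, both constrained by $F'_L$ being trivial. The main input is Harder's Theorem~\ref{th:harder}, which yields $H^1_\et(A, H) = 1$ for the quasi-split inner form $H$ of $L_P^\der$; the transfer of this triviality back to $L_P^\der$ uses that the inner-twist cocycle relating the two groups becomes trivial at the generic point. The principal obstacle is this final transfer: Harder's theorem requires the group to be quasi-split, but $L_P^\der$ need not be---indeed, it can be anisotropic over $L$ when $P$ is a minimal parabolic---so the bridge from the strictly isotropic back to the quasi-split setting via inner twists, together with control of the toral contribution (where on a Dedekind base the classification of projective modules prevents the naive $\mathrm{Pic}$-obstruction from interfering), constitutes the technical core of the proof.
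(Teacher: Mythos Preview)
Your reduction of structure group from $G$ to $P$ to $L_P$ (steps~1--3) is sound and standard. The genuine gap is step~4, and you have in fact put your finger on it yourself without resolving it.

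The assertion that ``the inner-twist cocycle relating the two groups becomes trivial at the generic point'' is false in general. Take $P$ to be a minimal strictly proper parabolic of a non--quasi-split $G$; then $L_P^{\der}$ is \emph{anisotropic} over $L$, hence certainly not isomorphic over $L$ to its quasi-split inner form $H$, and the class in $H^1(L,H_{\ad})$ realizing $L_P^{\der}$ as an inner twist of $H$ is nontrivial. So Harder's theorem, applied to $H$, tells you nothing about torsors under $L_P^{\der}$. More structurally: you have traded the problem ``kernel of $H^1(A,G)\to H^1(L,G)$ is trivial for $G$ strictly isotropic'' for ``kernel of $H^1(A,L_P^{\der})\to H^1(L,L_P^{\der})$ is trivial for $L_P^{\der}$ possibly anisotropic,'' which is strictly harder (and in fact false without the isotropy hypothesis, by Fedorov's counterexamples). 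Your toral step is also not justified: already for $T=\Gm$ one has $H^1_{\et}(A,\Gm)=\Pic(A)$, which need not vanish, so the vague appeal to ``classification of projective modules'' does not dispose of the obstruction without further argument tied to the specific torus $L_P/L_P^{\der}$ and the specific class.

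The paper proceeds along an entirely different route and never reduces to the Levi. One localizes: if $E$ is generically trivial it is trivial over some $A_f$; completing at the finitely many primes dividing $f$ gives $\hat A=\prod_i\hat A_i$ with each $\hat A_i$ a complete DVR, and the Grothendieck--Serre conjecture over semilocal Dedekind rings (Nisnevich, Guo) forces $E_{\hat A}$ to be trivial. Patching the trivial torsors over $A_f$ and $\hat A$ reduces the question to showing that the gluing element $g\in G(\hat L)$ lies in $G(\hat A)\cdot G(A_f)$. This is obtained from two factorization inputs available because $P$ is strictly proper: Gille's decomposition $G(\hat L_i)=G(\hat A_i)E_P(\hat L_i)$ over each complete field, and a lemma of Panin--Stavrova giving $E_P(\hat L)\subseteq G(\hat A)E_P(A_f)$. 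The parabolic is used only to produce enough elementary unipotents for these factorizations, not to reduce the structure group.
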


The following immediate corollary generalizes Harder's theorem.

\begin{cor}~\label{cor:Zar}
Let $A$ be a Dedekind domain.
Let $G$ be a simply connected
semisimple group over $A$. Assume that $G$ is strictly isotropic over the fraction field $L$ of $A$.
 Then $H^1_{Zar}(A,G)=1$.
\end{cor}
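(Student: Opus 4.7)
The plan is to deduce the corollary directly from Theorem \ref{th:main}; essentially no new ingredient beyond the theorem itself is needed, so there is no serious obstacle to identify. The argument splits into two easy observations: Zariski-local triviality of a torsor implies generic triviality, and étale triviality of a torsor is the same as the existence of a section.

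First, let $E$ be a Zariski-locally trivial $G$-torsor over $A$, representing a class in $H^1_{Zar}(A,G)$. By definition there is a Zariski open cover $\{U_i\}$ of $\Spec A$ on which $E$ trivializes. The generic point $\Spec L$ lies in some $U_i$, so restriction along $\Spec L\hra U_i\hra \Spec A$ yields a trivialization of $E_L$. In particular, $E$ viewed as an étale $G$-torsor lies in the kernel of the restriction map $H^1_{\et}(A,G)\to H^1_{\et}(L,G)$.

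Second, since $G$ is strictly isotropic over $L$, Theorem \ref{th:main} applies and asserts that this kernel is trivial. Therefore $E$ is trivial as an étale $G$-torsor, i.e.\ $E(A)\neq\emptyset$. But a $G$-torsor that admits a global section is automatically isomorphic to the trivial torsor $G_A$ as a $G$-scheme, independently of the chosen topology. Hence $E$ is also trivial as a Zariski torsor, proving $H^1_{Zar}(A,G)=1$.
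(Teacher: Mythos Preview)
Your proof is correct and is precisely the natural unpacking of why the paper calls this an ``immediate corollary'' of Theorem~\ref{th:main}: Zariski-local triviality forces triviality at the generic point, Theorem~\ref{th:main} then gives global \'etale triviality, and a torsor with a section is trivial. There is nothing to add or correct.
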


We note that Harder obtained Theorem~\ref{th:harder} as a corollary of the following statement: if the $L$-group
$G_L$ satisfies strong approximation, then $H^1_{Zar}(A,G)=1$~\cite[Kor. 2.3.2]{Har67}.
(In fact, this implication holds for any flat finite type $A$-group $G$, not just for simply connected semisimple groups.)
Strong approximation is known to hold for simply connected strictly isotropic groups $G$ such that
$G(\hat L_p)$ is $R$-trivial for every completion $\hat L_p$ of $L$ with respect a prime $p$ of $A$~\cite[Lemme 5.6]{Gil}.
If $L$ is a global field, a simply connected group $G$ over $L$ satisfies strong approximation under somewhat weaker
isotropy conditions, thanks to the classical results of Kneser, Platonov, and Prasad~\cite{Kne65,Pla69,Pra77}
In all these cases the
conclusion of Corollary~\ref{cor:Zar} follows from the above-cited result of Harder~\cite[Kor. 2.3.2]{Har67}.
Our proof does not use these results.

\section{Proof of the main theorem}

\begin{lem}\label{lem:spp}
Let $A$ be a Dedekind domain and let $L$ be the fraction field of $A$. Let $G$ be a reductive group over $A$. If $G$
is strictly isotropic over $L$, then $G$ is strictly isotropic over $A$.
\end{lem}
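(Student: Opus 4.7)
The plan is to extend the given strictly proper parabolic $L$-subgroup $P_L$ of $G_L$ to a strictly proper parabolic $A$-subgroup of $G$, working with the schemes of parabolic subgroups from \cite[Exp.\ XXVI]{SGA3}. Recall that parabolic $A$-subgroups of $G$ are classified by a smooth projective $A$-scheme $\mathrm{Par}(G)$ equipped with a smooth projective morphism, with geometrically integral fibres, onto a finite \'etale $A$-scheme $T$ of types. The parabolic $P_L$ determines a type $t_L \in T(L)$, and I would first extend it to a type over $A$: since $T$ is finite \'etale, hence proper, over $A$, the valuative criterion yields a unique section $t \in T(A)$ restricting to $t_L$.

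Next I would lift $P_L$, viewed as an $L$-point of $\mathrm{Par}_t(G) := \mathrm{Par}(G) \times_T \Spec A$, to an $A$-point. The natural approach is to form the scheme-theoretic closure $Z$ of the morphism $\Spec L \to \mathrm{Par}_t(G)$. Then $Z$ is an integral closed subscheme of the projective $A$-scheme $\mathrm{Par}_t(G)$ whose generic fibre is $\Spec L$, so $\dim Z = 1$ and the induced morphism $Z \to \Spec A$ is proper and birational; moreover, its fibres over closed points of $\Spec A$ are zero-dimensional, because otherwise the generic point of $Z$ would not lie over the generic point of $\Spec A$. Hence $Z \to \Spec A$ is finite, and a finite birational morphism onto a normal noetherian scheme is an isomorphism. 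Inverting it produces a section $P \in \mathrm{Par}_t(G)(A)$ restricting to $P_L$.

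It remains to verify that $P$ is strictly proper. By definition this depends only on its type: the condition that a type does not contain any connected component of the Dynkin diagram is locally constant on the \'etale $A$-scheme $T$, so it defines an open-closed subscheme $T^{\mathrm{sp}} \subseteq T$. Since $t_L$ lies in $T^{\mathrm{sp}}(L)$ and $\Spec A$ is connected, the extension $t$ lies in $T^{\mathrm{sp}}(A)$, hence $P$ is strictly proper. The main obstacle is the middle step; it relies essentially on projectivity (not merely smoothness) of $\mathrm{Par}_t(G)$ over $A$, via the scheme-theoretic closure argument, and would fail if the scheme of parabolics of type $t$ were only proper generically.
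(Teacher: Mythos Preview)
Your argument is correct and follows essentially the same route as the paper: use projectivity of the scheme of parabolic subgroups over $A$ to extend the $L$-parabolic to an $A$-parabolic, and then use connectedness of $\Spec A$ to conclude that strict properness, being a condition on the type, propagates from the generic point. The only differences are cosmetic: the paper invokes \cite[Cor.~7.3.6]{EGAII} directly for the surjectivity of $\mathcal P(A)\to\mathcal P(L)$, whereas you reprove this by the scheme-theoretic closure argument, and your preliminary extension of the type $t_L$ to $t\in T(A)$ is unnecessary (though harmless), since extending the section of $\mathrm{Par}(G)$ itself already determines the type.
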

\begin{proof}
Let $\mathcal{P}$ be the smooth projective $A$-scheme of all parabolic subgroups of $G$ in the sense
of~\cite[Exp. XXVI, Cor. 3.5]{SGA3}. By~\cite[Corollaire 7.3.6]{EGAII} the natural map $\mathcal{P}(A)\to\mathcal{P}(L)$
is surjective. Since $G$ is strictly isotropic over $L$, there is a strictly proper parabolic subgroup
$P\in\mathcal{P}(L)$. Let $\tilde P\in\mathcal{P}(A)$ be a preimage of $P$. We claim that $\tilde P$ is a
strictly proper parabolic subgroup of $G$. Indeed,
since $\Spec(A)$ is connected, by~\cite[Exp. XXII, Proposition 2.8, Lemme 5.2.7]{SGA3}
it suffices to check that $\tilde P$ is strictly proper at any fixed point of $\Spec(A)$.
Thus, $\tilde P$ is strictly proper, since $\tilde P_K=P$ is strictly proper by assumption.
\end{proof}

For any commutative ring $R$, we denote by $E_P(R)$ the subgroup $G(R)$
generated by the $R$-points of the unipotent radicals of $P$ and of an opposite parabolic subgroup
$P^-$. Note that choosing a different opposite parabolic subgroup $P^-$ does not change $E_P(R)$ by~\cite[Exp. XXVI,
Cor. 1.8]{SGA3}.

\begin{proof}[Proof of Theorem~\ref{th:main}]
If $E$ is not trivial, then, since $E$ is trivial over $L$, there is a non-invertible element $f\in A$ such that $E$ is trivial over $A_f$.
Let $p_1,\ldots,p_k$ be all distinct maximal ideals of $A$ dividing the ideal $(f)$.
Let $\hat A$ be the completion of $A$ with respect to the ideal $(f)$. Then $\hat A$ is isomorphic to the direct product of rings
$\hat A_1,\ldots,\hat A_k$, where each $\hat A_i$ is the completion of $A$ with respect to the ideal $p_i$.
One has $A/f\cong\hat A/f$. Let $\hat L_i$ denote the fraction field of $\hat A_i$. Then the localization
$(\hat A)_f$ is isomorphic to the direct product $\hat L$ of the fraction fields $\hat L_i$ of the rings $\hat A_i$.

By the Serre--Grothendieck conjecture for
semilocal Dedekind rings~\cite{Guo-ded,Ni} the torsor $E$ is trivial over $\hat A_i$, since $E$ is trivial over
its fraction field $\hat L_i$ which is a field extension of $L$. Consequently, the torsor $E$ is trivial over $\hat A$.

Consider the commutative square
\begin{equation*}
\xymatrix@R=15pt@C=20pt{
A\ar[r]\ar[d]&A_{f}\ar[d]\\
\hat A\ar[r]&\hat L\\
}
\end{equation*}
This is a patching diagram for torsors. The torsor $E$ is trivial over $A_{f}$ and over $\hat A$. Then there is
an element $g\in G(\hat L)$ that patches together the two trivial $G$-torsors over $\hat L$
to get the torsor $E$.

By Lemma~\ref{lem:spp} $G$ has a strictly proper parabolic subgroup $P$ over $A$. Let $L$ be a
Levi subgroup of $P$ over $A$. Since $\Spec(A)$ is connected,
by~\cite[Th. 7.3.1]{Gi4} there is an $A$-group homomorphism
$\lambda:\Gm_{,A}\to L$ such that
$L=\Cent_G(\lambda(\Gm_{,A}))$ and $P$ is one of the two opposite parabolic $A$-subgroups $P=P^+$ and $P^-$
of $G$ uniquely determined by $\lambda$ in the following way described in~\cite[Exp. XXVI, Prop. 6.1]{SGA3}.
Let $X^*(\lambda(\Gm_{,A}))\cong\ZZ$ be the lattice
of characters of $\lambda(\Gm_{,A})\cong\Gm_{,A}$. The adjoint action of $\lambda(\Gm_{,A})$ on $\Lie(G)$
defines a $\ZZ$-grading on $\Lie(G)$, such that $\Lie(L)=\Lie(G)_0$, and $P,P^-$ are the unique parabolic subgroups of $G$ such that
$$
\Lie(P)=\Lie(L)\oplus\bigoplus\limits_{\chi\in \ZZ_{>0}}\Lie(G)_\chi\mbox{\quad and\quad
 }
\Lie(P^-)=\Lie(L)\oplus\bigoplus\limits_{\chi\in\ZZ_{<0}}\Lie(G)_\chi.
$$
This means that $P,P^-$ are a pair of opposite parabolic subgroups of $G$
that correspond to $S=\lambda(\Gm_{,A})$ in the sense of~\cite[Lemma 2.4]{PaSt}. These subgroups are strictly proper.
Then by~\cite[Lemme 4.5]{Gil} one has $G(\hat L_i)=G(\hat A_i)E_P(\hat L_i)$ for all $1\le i\le k$.

Since the base $\Spec(A)$ is affine, normal and Noetherian, by~\cite[Corollary 3.2]{Thomason} $G$ is $A$-linear,
that is, there is a closed $A$-embedding $G\to\GL_{n,A}$ for some $n\ge 1$. Clearly, we can assume that $n\ge 3$
without loss of generality. Then all the conditions of~\cite[Lemma 3.2]{PaSt} are satisfied, and it follows that
$E_P(\hat L)\subseteq G(\hat A)E_P(A_f)$. Summing up, we have
$$
g\in G(\hat L)=G(\hat A)E_P(\hat L)\subseteq G(\hat A)E_P(A_f).
$$
It follows that $E$ is trivial over $A$.

\end{proof}

\section{Nisnevich conjecture}

In 1989 Y. Nisnevich formulated the following

\begin{conj}\cite[Conjecture 1.3]{Ni89}\label{conj:nis}
Let $R$ be a regular local ring of dimension $d\ge 1$, let $m$ be the maximal ideal of $R$ and let $u\in m\setminus m^2$
be an element.
Let $G$ be a strictly isotropic reductive group over $R$. Then $H^1_{Zar}(R_u,G)=1$.
\end{conj}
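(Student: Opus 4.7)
The plan is to adapt the strategy of Theorem~\ref{th:main} from Dedekind domains to the ring $R_u$.

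Two edge cases are immediate. If $d=1$, then $R$ is a DVR and $R_u$ is its fraction field, so $H^1_{Zar}(R_u,G)=1$ trivially. If $d=2$, then $R$ is a regular local ring of dimension $2$, hence a UFD by Auslander-Buchsbaum, so $R_u$ is a one-dimensional regular UFD, i.e., a Dedekind domain. Combined with the analogue of Lemma~\ref{lem:spp} for $R$ (whose proof goes through verbatim since $\Spec R$ is connected), Theorem~\ref{th:main} applied to the Dedekind domain $R_u$ yields the claim.

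For $d \ge 3$, I would proceed in parallel with the proof of Theorem~\ref{th:main}. First, use the analogue of Lemma~\ref{lem:spp} to produce a strictly proper parabolic $P$ over $R$, hence over $R_u$. Let $E$ be a Zariski-locally trivial $G$-torsor over $R_u$. Then there exists $f \in R$ with $u \nmid f$ such that $E$ is trivial over $R_{uf}$, and $E$ is also trivial over the fraction field $K$ of $R$. Decomposing $f = f_1 \cdots f_k$ into prime factors in the UFD $R$, the corresponding height-one primes $\mathfrak{p}_i = (f_i)$ avoid $u$, and each $R_{\mathfrak{p}_i}$ is a DVR. Let $\hat R_i$ be its completion and let $\hat L_i$ denote the fraction field of $\hat R_i$. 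By the Grothendieck-Serre conjecture for semilocal Dedekind rings~\cite{Guo-ded,Ni}, $E$ is trivial over each $\hat R_i$ since it is trivial over $\hat L_i \supseteq K$. One would then form the patching square
\[
\xymatrix{R_u \ar[r] \ar[d] & R_{uf} \ar[d] \\ \prod_i \hat R_i \ar[r] & \prod_i \hat L_i}
\]
and conclude exactly as in the proof of Theorem~\ref{th:main}: the gluing element $g \in \prod_i G(\hat L_i)$ decomposes, via~\cite[Lemme 4.5]{Gil} applied factor by factor, as $g = g_1 g_2$ with $g_1 \in \prod_i G(\hat R_i)$ and $g_2 \in \prod_i E_P(\hat L_i)$; an analogue of~\cite[Lemma 3.2]{PaSt} should then factor $g_2 \in G(\prod_i \hat R_i) \cdot E_P(R_{uf})$, whence $E$ is trivial over $R_u$.

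The main obstacle is verifying the patching step for $d \ge 3$. In the Dedekind setting of Theorem~\ref{th:main}, the $(f)$-adic completion of $A$ equals $\prod_i \hat A_i$, so Beauville--Laszlo descent for torsors applies verbatim. For higher-dimensional $R_u$, however, the product $\prod_i \hat R_i$ captures the completions of $R_u$ only at the generic points of the divisor $V(f) \subset \Spec R_u$, missing the higher-codimension primes containing $f$. The central technical task is therefore either to establish faithfully flat descent of $G$-torsors along the square above directly (going beyond classical Beauville--Laszlo), or to enlarge the lower row to the true $(f)$-adic completion $\widehat{R_u}$ of $R_u$ and prove a Grothendieck-Serre-type triviality statement for $E$ over this semilocal higher-dimensional complete ring. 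The latter route reduces Conjecture~\ref{conj:nis} to an instance of Grothendieck-Serre currently known only in partial cases, and I expect this to be the delicate heart of the argument.
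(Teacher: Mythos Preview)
The statement you are attempting to prove is a \emph{conjecture}, not a theorem; the paper does not provide a proof of it. The paper's contribution is to establish two partial cases: the $d=2$ case for \emph{simply connected} $G$ (as an immediate corollary of Theorem~\ref{th:main}, exactly as you describe), and a specific $d=3$ case (Theorem~\ref{th:3dim}) under the additional hypotheses that $R$ is essentially smooth over a mixed-characteristic DVR $D$ with perfect residue field and that $G$ is defined and strictly isotropic already over $D$. Your $d\le 2$ discussion therefore matches the paper, except that you should note Theorem~\ref{th:main} requires $G$ simply connected, so this does not settle the $d=2$ conjecture for arbitrary reductive $G$; also the appeal to Lemma~\ref{lem:spp} is superfluous, since $G$ is assumed strictly isotropic over $R$ itself.

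For $d\ge 3$ your proposed route genuinely fails, and you have correctly diagnosed why: the square you write down is \emph{not} a patching square for torsors. The map $R_u\to\prod_i\hat R_i$ remembers only the height-one primes dividing $f$, so the pair $(R_{uf},\prod_i\hat R_i)$ is not an fpqc cover of $R_u$ and Beauville--Laszlo descent does not apply. Replacing $\prod_i\hat R_i$ by the honest $(f)$-adic completion $\widehat{R_u}$ restores the patching square, but then one must know that $E$ trivializes over $\widehat{R_u}$, which is precisely a Grothendieck--Serre-type statement over a higher-dimensional complete local ring and is not available in general. This is why the paper, in the $d=3$ case it does treat, takes a completely different route: it uses Dutta's version of Lindel's lemma to descend the torsor to a localization of a polynomial ring $A[x_1]$ over a one-dimensional base $A$, and then invokes the special Lemmas~\ref{lem:A[x]} and~\ref{lem:x2dim} (which in turn rely on the $\mathbb{P}^1$-bundle result of~\cite{PSt2} and Quillen-type local-global principles). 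None of these ingredients are visible in your outline, and without a substitute for them the $d\ge 3$ argument cannot be completed by the patching strategy alone.
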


Nisnevich established this conjecture for $\dim R=2$ and $G$ quasi-split~\cite[Prop. 5.1]{Ni89}.

Strictly speaking,
the initial formulation of this conjecture by Nisnevich did not include the isotropy condition
(although this condition was imposed on $G$ in the text right before and after the formulation
of~\cite[Conjecture 1.3]{Ni89}). However, R. Fedorov~\cite{Fed-nis} provided an example of an anisotropic adjoint semisimple group for which the same
statement does not hold, so we include this condition in the statement.

Fedorov also established this conjecture under the assumption that $R$ contains an infinite field. He also formulated
and proved a suitable extension of this statement to semilocal regular rings.

K. \v{C}esnavi\v{c}ius~\cite{C-bq} extended Fedorov's result to the case where
$R$ contains a finite field, and under the weaker assumption that only $G_{R/u}$ is strictly isotropic. On top of that,
 \v{C}esnavi\v{c}ius proved the following mixed characteristic statement:
$H^1_{Zar}(R_u,G)=1$ assuming that $R$ is semilocal, geometrically regular over a Dedekind subring
$D$ such that $u\in D$, and $G$ is quasi-split. Thus, the equicharacteristic case of the conjecture is completely settled.

Nisnevich conjecture is a direct generalization of the so-called Quillen's question.

\begin{conj}\cite[p. 170]{Q}\label{conj:Q}
Let $R$ be a regular local ring with maximal ideal $m$, and let $f\in m\setminus m^2$. Then
every finitely generated projective $R_f$-module is free.
\end{conj}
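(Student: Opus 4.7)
The plan is to derive Quillen's question as a special case of Conjecture~\ref{conj:nis} for $G=\GL_n$. For $n\ge 2$, the group $\GL_{n,R}$ is reductive and admits strictly proper parabolic subgroups: the Dynkin diagram of the derived group $\SL_n$ is connected of type $A_{n-1}$, and every proper parabolic has type a proper subset of this diagram, so it does not contain the whole component. A finitely generated projective $R_u$-module of rank $n\ge 2$ corresponds to a class in $H^1_{Zar}(R_u,\GL_n)$, and the rank one case reduces to $\Pic(R_u)=0$, which holds because $R$ is regular local (hence a UFD) and the UFD property is preserved under localization. Thus Conjecture~\ref{conj:nis} for $G=\GL_n$ implies Quillen's question.

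Next, I would invoke the known cases of Conjecture~\ref{conj:nis} summarized in the excerpt. Fedorov's theorem together with \v{C}esnavi\v{c}ius' extension settle Conjecture~\ref{conj:nis} in full equicharacteristic generality, so Quillen's question follows unconditionally whenever $R$ contains a field. In mixed characteristic, \v{C}esnavi\v{c}ius' theorem requires $G$ to be quasi-split over $R$ and $R$ to be geometrically regular over a Dedekind subring $D\ni u$. Since $\GL_n$ is split over $\ZZ$, the quasi-split hypothesis is automatic, so Quillen's question holds in every mixed characteristic setting currently covered by Conjecture~\ref{conj:nis}.

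For the genuinely open mixed characteristic residue, I would adapt the proof of Theorem~\ref{th:main}. Given a $\GL_n$-torsor $E$ on $R_u$, form the Beauville--Laszlo patching square
$$
\xymatrix@R=15pt@C=20pt{
R\ar[r]\ar[d]&R_{u}\ar[d]\\
\hat R\ar[r]&\hat R_u\\}
$$
where $\hat R$ is the $(u)$-adic completion of $R$. Triviality of the extended torsor over $\hat R$ followed by a double coset decomposition $\GL_n(\hat R_u)=\GL_n(\hat R)\cdot E_P(R_u)$ for a strictly proper maximal parabolic $P$ of $\GL_n$ would yield triviality of $E$ over $R$ by the very same patching argument used in the proof of Theorem~\ref{th:main}.

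The main obstacle is that, unlike in the Dedekind setting where $\hat A$ splits as a product of complete DVRs and \cite[Lemme 4.5]{Gil} combined with \cite[Lemma 3.2]{PaSt} supplies the decomposition, the higher-dimensional completion $\hat R$ is henselian along $(u)$ but no longer one-dimensional. The double coset decomposition must therefore be established by a stability-type argument along the Artin--Rees filtration $R/u^k$, using the linearity of $G$ (Thomason's embedding theorem still applies since $\Spec R$ is affine, normal, and Noetherian) to reduce factorisations of matrices in $\GL_n(\hat R_u)$ to elementary moves with entries in $R_u$. Moreover, the triviality of the extension of $E$ to $\hat R$ no longer reduces to Serre--Grothendieck over Dedekind domains but requires its general mixed characteristic form, known only in certain cases. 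Either step could fail to carry over, which is precisely why Conjecture~\ref{conj:nis} (and hence Quillen's question) remains open in this residue.
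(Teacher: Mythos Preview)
The statement you are addressing is labeled a \emph{Conjecture} in the paper, and the paper makes no attempt to prove it. The only content the paper offers about Conjecture~\ref{conj:Q} is the one-line remark that ``Nisnevich conjecture is a direct generalization of the so-called Quillen's question'' together with the observation that the 3-dimensional case is due to Gabber. There is therefore no proof in the paper against which to compare your proposal.

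Your first paragraph correctly supplies the argument the paper leaves implicit, namely that Conjecture~\ref{conj:Q} is the special case $G=\GL_n$ of Conjecture~\ref{conj:nis}: $\GL_{n,R}$ is split reductive with connected Dynkin diagram $A_{n-1}$ for $n\ge 2$, so any proper parabolic is strictly proper, and rank-$n$ projective $R_f$-modules are Zariski-locally trivial $\GL_n$-torsors. Your separate treatment of rank $1$ via the UFD property of $R$ (hence of $R_f$) is also correct and necessary, since $\GL_1=\Gm$ is not strictly isotropic and lies outside Conjecture~\ref{conj:nis}.

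The remainder of your write-up, however, is not a proof but a status report: you record which cases follow from Fedorov and \v{C}esnavi\v{c}ius, and you sketch how one might try to push the patching argument of Theorem~\ref{th:main} to higher-dimensional $R$, while correctly identifying the two genuine obstructions (Grothendieck--Serre over the completion $\hat R$ is unknown in this generality, and the double-coset decomposition $G(\hat R_f)=G(\hat R)E_P(R_f)$ has no analogue beyond dimension one). This is an accurate summary of why the conjecture is open, but it is not a proof, and you should not present it as one. The paper itself does not claim a proof of Conjecture~\ref{conj:Q}; it states it as an open problem and uses it only as motivation.
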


On top of the cases mentioned above, Conjecture~\ref{conj:Q} is known for all 3-dimensional regular local rings
$R$~\cite{Gabber-thesis}.

Our theorem yields the following two cases of Conjecture~\ref{conj:nis}.

\begin{thm}
Let $(R,m)$ be a 2-dimensional regular local ring with the fraction field $L$. Let $u\in m$ be any non-zero element.
Let $G$ be a strictly isotropic simply connected group over $R_u$. Then $H^1_{\et}(R_u,G)\to H^1_{\et}(L,G)$ has trivial kernel. As a consequence,
$H^1_{Zar}(R_u,G)=1$.
\end{thm}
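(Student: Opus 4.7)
The plan is to observe that $R_u$ is itself a Dedekind domain with fraction field $L$, and then to apply Theorem~\ref{th:main} directly. First I would verify this reduction. Since $R$ is a $2$-dimensional regular local Noetherian domain, its localization $R_u$ is again a regular Noetherian domain. Prime ideals of $R_u$ correspond to primes of $R$ not containing $u$; as $u\in m$ is nonzero, the maximal ideal $m$ does not survive in $R_u$ (it meets the multiplicative system), while the generic point $(0)$ does. Hence $\dim R_u\le 1$, and in fact equals $1$ since only finitely many height-$1$ primes of $R$ divide $u$ whereas the $2$-dimensional regular local ring $R$ has infinitely many height-$1$ primes. Thus $R_u$ is a $1$-dimensional regular Noetherian domain, i.e.\ a Dedekind domain, and its fraction field is still $L$.

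Next, the strict isotropy of $G$ over $R_u$ passes to $L$ simply by base change of a strictly proper parabolic $R_u$-subgroup along $R_u\to L$ (equivalently, Lemma~\ref{lem:spp} gives the converse). All hypotheses of Theorem~\ref{th:main} are therefore satisfied with $A=R_u$, and the triviality of the kernel of $H^1_{\et}(R_u,G)\to H^1_{\et}(L,G)$ follows at once.

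For the Zariski consequence, I would argue as follows. Any Zariski-locally trivial $G$-torsor $E$ over $R_u$ is trivial at the generic point $\Spec(L)$, so its image in $H^1_{\et}(R_u,G)$ lies in the kernel of restriction to $L$ and is trivial by the first part. Since $G$ is smooth affine, $E$ is representable by a scheme over $\Spec(R_u)$, and \'etale triviality produces an isomorphism $E\cong G_{R_u}$ of $G$-schemes; this trivializes $E$ in the Zariski topology as well. Hence $H^1_{Zar}(R_u,G)=1$.

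There is no real obstacle: the whole content of the statement is that the one-variable localization $R_u$ of a $2$-dimensional regular local ring is Dedekind with the same fraction field, after which the result is a direct specialization of Theorem~\ref{th:main}.
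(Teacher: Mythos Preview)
Your approach is correct and is essentially the same as the paper's own proof, which consists of the single observation that $R_u$ is a Dedekind domain and is therefore subject to Theorem~\ref{th:main}. You simply spell out in more detail why $R_u$ is Dedekind with fraction field $L$ and how the Zariski statement follows, but the argument is identical in substance.
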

\begin{proof}
The ring $R_u$ is a Dedekind domain. Thus it is subject to Theorem~\ref{th:main}.
\end{proof}

\begin{thm}\label{th:3dim}
Let $D$ be a discrete valuation ring of mixed characteristic such that its residue field is perfect. Let $(R,m)$ be a
3-dimensional local
ring, essentially smooth over $D$. Let $f\in m\setminus m^2$ be a regular element of $R$. Let $G$
be a simply connected strictly isotropic group over $D$. Then $H^1_{\et}(R_f,G)\to H^1_{\et}(L,G)$ has trivial kernel,
where $L$ is the fraction field of $R$.
\end{thm}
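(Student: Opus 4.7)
The plan is to imitate the proof of Theorem~\ref{th:main} in a higher-dimensional setup, slicing $R_f$ into a ``generic fiber'' and a ``special fiber'' over $D$, each of which is a Dedekind domain to which Theorem~\ref{th:main} applies, and then assembling the trivializations by Beauville--Laszlo patching.

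Let $E$ be a $G$-torsor over $R_f$ that is trivial over $L$, and let $\pi$ be a uniformizer of $D$. I would focus on the principal case in which $\pi \in m$ and the images of $\pi$ and $f$ in $m/m^2$ are linearly independent; the remaining degenerate cases reduce to the equicharacteristic Nisnevich conjecture, already known by Fedorov and \v{C}esnavi\v{c}ius. In the principal case, the ring $R_{f\pi}:=R_f[\pi^{-1}]$ is a $1$-dimensional regular noetherian domain essentially smooth over $\mathrm{Frac}(D)$, hence a Dedekind domain with fraction field $L$, and applying Theorem~\ref{th:main} to $R_{f\pi}$ and $G$ (which is strictly isotropic by pullback from $D$) shows that $E$ is trivial over $R_{f\pi}$. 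Dually, by Grothendieck--Serre for the DVR $R_{(\pi)}$, the torsor $E$ is trivial over $\mathrm{Frac}(R/\pi)$; and since $R/\pi$ is a $2$-dimensional regular local ring essentially smooth over the perfect residue field $k$ of $D$, with $\bar f\in m_{R/\pi}\setminus m_{R/\pi}^2$, the ring $(R/\pi)_{\bar f}$ is also Dedekind, and applying Theorem~\ref{th:main} once more shows that $E$ is trivial over $R_f/\pi$. By Elkik's approximation theorem for smooth torsors, $E$ is then trivial over the $\pi$-adic completion $T$ of $R_f$.

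The Beauville--Laszlo patching diagram
\begin{equation*}
\xymatrix@R=15pt@C=20pt{
R_f \ar[r] \ar[d] & R_{f\pi} \ar[d] \\
T \ar[r] & T_\pi \\
}
\end{equation*}
represents $E$, now trivialized on both $T$ and $R_{f\pi}$, by a gluing element $\gamma\in G(T_\pi)$; triviality of $E$ over $R_f$ is equivalent to $\gamma\in G(T)\cdot G(R_{f\pi})$. The desired decomposition is produced exactly as in the proof of Theorem~\ref{th:main}: using the strictly proper parabolic $P$ of $G$ over $D$ (hence over $R_f$), Gille's Lemma gives $G(T_\pi)=G(T)\cdot E_P(T_\pi)$, and the Panin--Stavrova Lemma~3.2 gives $E_P(T_\pi)\subseteq G(T)\cdot E_P(R_{f\pi})$, whence $\gamma$ has the required form.

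The main obstacle is verifying the applicability of Gille's Lemma in this setting: $T$ need not be semilocal, since its reduction modulo $\pi$ is $(R/\pi)_{\bar f}$, a $1$-dimensional regular noetherian ring that may have infinitely many maximal ideals. The perfect residue field hypothesis on $D$ and the essential smoothness of $R$ over $D$ are used precisely here, to argue locally at each maximal ideal of $T$ and to reassemble the local decompositions via the $\pi$-adic topology, in the spirit of \v{C}esnavi\v{c}ius's mixed-characteristic refinements.
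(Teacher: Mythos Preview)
Your approach has a fatal gap at the very first step: the ring $R_{f\pi}$ is \emph{not} a Dedekind domain. Inverting two independent elements of a regular system of parameters in a $3$-dimensional regular local ring does not drop the dimension by $2$; it only removes two irreducible divisors from $\Spec R$, and the complement is still $2$-dimensional. Concretely, take $D=\ZZ_{(p)}$, $R=D[x,y]_{(p,x,y)}$, $\pi=p$, $f=x$: the height-$2$ prime $(y,\,x^2-p)$ of $R$ contains neither $p$ nor $x$, so it survives in $R_{f\pi}$, and $\dim R_{f\pi}=2$. Thus Theorem~\ref{th:main} cannot be applied to $R_{f\pi}$, and your trivialization over the ``generic fiber'' is unfounded. (Your trivialization over $(R/\pi)_{\bar f}$ is fine, since that ring genuinely is $1$-dimensional.) Even setting this aside, you correctly flag but do not resolve the second obstruction: the $\pi$-adic completion $T$ of $R_f$ is far from a product of complete DVRs, so neither \cite[Lemme~4.5]{Gil} nor \cite[Lemma~3.2]{PaSt} applies to give $G(T_\pi)=G(T)E_P(T_\pi)$ and $E_P(T_\pi)\subseteq G(T)E_P(R_{f\pi})$. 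Your closing appeal to ``\v{C}esnavi\v{c}ius's mixed-characteristic refinements'' is not a proof.

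The paper's argument is entirely different and avoids both problems. Instead of slicing by $\pi$, it uses Dutta's mixed-characteristic version of Lindel's lemma~\cite{Dut} (this is where the hypotheses that $D$ is excellent with perfect residue field and $f\in m\setminus m^2$ are actually used) to construct an \'etale neighborhood $B\subseteq R$ with $B$ a localization of $W[x_1,x_2]$ at a maximal ideal, $W$ a DVR, $x_2=f$, together with an element $b\in B$ with $B/b\cong R/h$. A Zariski patching square then transports the torsor from $R_f$ down to $B_{x_2}$. The key computation is that $B_{x_2}$ can be rewritten as $A[x_1]_S$ for the Dedekind domain $A=(W[x_2]_{(\pi,x_2)})_{x_2}$ and a multiplicative set $S$; one then concludes by the auxiliary Lemma~\ref{lem:x2dim}, which handles torsors over such rings $A[x]_S$ using Theorem~\ref{th:main}, Lemma~\ref{lem:A[x]}, and a Quillen patching/local-global argument. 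The upshot is that the $2$-dimensionality of $R_f$ is absorbed into the polynomial variable $x_1$ over a genuine Dedekind base, rather than attacked directly by a $\pi$-adic patching.
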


For the proof of this theorem we need to establish some lemmas.

\begin{lem}\label{lem:A[x]}
Let $A$ be a discrete valuation ring and let $L$ be the fraction field of $A$.
Let $G$ be a strictly isotropic reductive group over $A$.
Then $H^1_{\et}(A[x],G)\to H^1_{\et}(L[x],G)$ has trivial kernel.
\end{lem}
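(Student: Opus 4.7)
The plan is to mimic the proof of Theorem~\ref{th:main}. Let $\pi$ be a uniformizer of $A$, so that $L[x] = A[x][\pi^{-1}]$, and let $\tilde R := \hat A\langle x\rangle$ denote the $\pi$-adic completion of $A[x]$ (the one-variable Tate algebra over the completion $\hat A$ of $A$). Since $\pi$ is a regular element of $A[x]$, Beauville--Laszlo descent provides a patching square
$$
\xymatrix@R=15pt@C=20pt{
A[x]\ar[r]\ar[d] & L[x]\ar[d]\\
\tilde R\ar[r] & \tilde R[\pi^{-1}]\\
}
$$
effective for $G$-torsors. Given a class $E$ in the kernel of $H^1_{\et}(A[x],G)\to H^1_{\et}(L[x],G)$, the goal is to show that $E$ is trivial.

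First, I would prove that $E|_{\tilde R}$ is trivial. Pulling $E$ back along the augmentation $A[x]\to A$, $x\mapsto 0$, gives a $G$-torsor $E|_A$ over the DVR $A$ whose generic fibre is trivial (since $E|_{L[x]}$ is). The Grothendieck--Serre conjecture for reductive groups over DVRs (cf.~\cite{Guo-ded,Ni}) then forces $E|_A$ to be trivial, and hence so is its reduction $E|_k$ on $\Spec k$, where $k=A/\pi$. By the theorem of Raghunathan--Ramanathan (extended to reductive groups over arbitrary fields), every $G_k$-torsor on $\mathbb{A}^1_k$ is pulled back from $\Spec k$, so $E|_{k[x]}$ is trivial. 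Since $G$ is smooth and $\mathrm{Lie}(G)$ becomes a free $A[x]$-module, the obstruction groups $H^i(\mathbb{A}^1_k,\mathrm{Lie}(G))$ for $i\geq 1$ vanish; consequently the trivialization of $E|_{k[x]}$ lifts successively to $A[x]/\pi^n$ for every $n\geq 1$, and hence to $\tilde R = \varprojlim_n A[x]/\pi^n$.

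Second, with both $E|_{L[x]}$ and $E|_{\tilde R}$ trivial, the torsor $E$ is determined by a patching element $g\in G(\tilde R[\pi^{-1}])$, and its triviality is equivalent to the decomposition $g\in G(\tilde R)\cdot G(L[x])$. By Lemma~\ref{lem:spp}, $G$ admits a strictly proper parabolic $A$-subgroup $P$. A Tate-algebra analogue of~\cite[Lemme~4.5]{Gil} should give
$$
G(\tilde R[\pi^{-1}]) = G(\tilde R)\cdot E_P(\tilde R[\pi^{-1}]),
$$
while~\cite[Lemma~3.2]{PaSt} applied to $R=A[x]$ and $f=\pi$ yields
$$
E_P(\tilde R[\pi^{-1}])\subseteq G(\tilde R)\cdot E_P(L[x]).
$$
Combining these two inclusions produces $g\in G(\tilde R)\cdot G(L[x])$, as required. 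The principal difficulty is the Tate-algebra version of Gille's decomposition; I expect it to follow by reducing to each closed point of $\Spec k[x]$ (which corresponds to a complete DVR with finite residue extension of $k$), applying Gille's original result at each such completion, and gluing the resulting decompositions using the strict isotropy of $G$.
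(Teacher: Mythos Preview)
Your approach differs substantially from the paper's, and it contains a genuine gap.

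The paper does \emph{not} redo the patching argument of Theorem~\ref{th:main} over $A[x]$. Instead, after reducing to the simply connected case via~\cite[Lemma~5.3]{St-k1}, it \emph{applies} Theorem~\ref{th:main} to the Dedekind domain $A(x)$: this shows that any $E$ in the kernel of $H^1_{\et}(A[x],G)\to H^1_{\et}(L[x],G)$ already lies in the kernel of $H^1_{\et}(A[x],G)\to H^1_{\et}(A(x),G)$, i.e.\ trivializes over $A[x]_f$ for some monic $f\in A[x]$. One then extends $E$ to a $G$-torsor on $\mathbb{P}^1_A$ trivial at $\infty$ and concludes by~\cite[Theorem~1.7]{PSt2}. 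No new decomposition result is needed.

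Your route, by contrast, hinges on a ``Tate-algebra analogue'' of~\cite[Lemme~4.5]{Gil}, namely
$G(\tilde R[\pi^{-1}]) = G(\tilde R)\cdot E_P(\tilde R[\pi^{-1}])$
with $\tilde R=\hat A\langle x\rangle$. Gille's proof uses in an essential way that the residue ring of the complete local ring is a \emph{field}: one lifts a Bruhat-type decomposition from $G(k)$ and then uses $\pi$-adic completeness to absorb the error terms into $E_P$. Here the residue ring is $k[x]$, and there is no Bruhat decomposition of $G(k[x])$ to lift. Your suggested fix---pass to each closed point of $\Spec k[x]$, apply Gille there, and glue---does not close the argument: there are infinitely many such points, the local factorizations $g=u_{\mathfrak m}e_{\mathfrak m}$ are noncanonical, and you have no mechanism to produce a single global $u\in G(\tilde R)$ out of them. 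You should also check whether~\cite[Lemma~3.2]{PaSt} genuinely applies with base $A[x]$ (a $2$-dimensional regular ring) rather than a Dedekind domain; the paper only invokes it over Dedekind $A$. Without the Tate-algebra decomposition, your patching step does not conclude.
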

\begin{proof}
Since the Serre-Grothendieck conjecture holds for any reductive group over $A$, by~\cite[Lemma 5.3]{St-k1}
it is enough to prove the claim for every strictly isotropic simply connected group $G$ over $A$.
Assume that $G$ is simply connected.
Since $A(x)$ is a Dedekind domain, by Theorem~\ref{th:main} the kernel of $H^1_{\et}(A(x),G)\to H^1_{\et}(L(x),G)$
is trivial. It follows that
$$
\ker\left(H^1_{\et}(A[x],G)\to H^1_{\et}(L[x],G)\right)\subseteq
\ker\left(H^1_{\et}(A[x],G)\to H^1_{\et}(A(x),G)\right).
$$
Let $E$ be a principal $G$-bundle over $A[x]$ and let $f\in A[x]$ be a monic polynomial such that
$E_{A[x]_f}$ is trivial. Then $E$ extends to a principal $G$-bundle $\tilde E$ over $\mathbb{P}^1_A$ such that $\tilde E$ is trivial
at infinity. Then by~\cite[Theorem 1.7]{PSt2} $\tilde E|_{\mathbb{A}^1_A}$ is trivial. Hence $E$ is trivial over $A[x]$.
\end{proof}

\begin{lem}\label{lem:x2dim}
Let $A$ be a Dedekind domain and let $L$ be the fraction field of $A$.
Let $G$ be a strictly isotropic simply connected group over $A$.
Let $S\subseteq A[x]$ be a multiplicatively closed subset.
Then $H^1_{\et}(A[x]_S,G)\to H^1_{\et}(L(x),G)$ has trivial kernel.
\end{lem}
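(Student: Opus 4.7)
The plan is to extend $E$ from $A[x]_S$ to a $G$-torsor on all of $A[x]$, and then apply Lemma~\ref{lem:A[x]} (whose proof remains valid when the base $A$ is any Dedekind domain, not just a DVR).

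First I would carry out the standard reductions. Since $G$-torsors are of finite presentation, $E$ is defined over some $A[x]_t$ with $t \in S$, and since $A[x]_S$ is a further localization, it suffices to prove triviality over $A[x]_t$; so we may assume $S = \{t^n\}_{n \ge 0}$. By~\cite[Lemma 5.3]{St-k1} together with the Serre--Grothendieck conjecture for reductive groups over Dedekind rings we may also assume $G$ is simply connected. Next, the Nagata ring $A(x) = A[x]_M$ (the localization at the set $M$ of monic polynomials) is Dedekind since $A$ is, hence so is $A(x)_t$, whose fraction field is $L(x)$. Applying Theorem~\ref{th:main} to $A(x)_t$ shows that the pullback of $E$ to $A(x)_t$ is trivial; equivalently, $E|_{A[x]_{tf}}$ is trivial for some monic polynomial $f \in A[x]$.

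The main step is to extend $E$ to a $G$-torsor $\tilde E$ on the regular two-dimensional scheme $\Spec(A[x])$. At each height-one prime $\mathfrak q$ of $A[x]$ contained in the codimension-one complement $V(t)$, the local ring $(A[x])_{\mathfrak q}$ is a discrete valuation ring with fraction field $L(x)$, so the triviality of $E$ over $L(x)$ yields a trivial extension of $E$ to $(A[x])_{\mathfrak q}$. Invoking codimension-one purity for torsors under a simply connected semisimple group on a regular two-dimensional noetherian scheme, these local extensions glue with $E$ to produce the desired $\tilde E$. Its restriction to $L(x)$ is trivial, so by Theorem~\ref{th:main} applied to the Dedekind domain $L[x]$ its restriction to $L[x]$ is trivial, and then Lemma~\ref{lem:A[x]} implies that $\tilde E$ is trivial over $A[x]$. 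In particular $E = \tilde E|_{A[x]_t}$ is trivial.

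The hard part will be the codimension-one purity step. The required statement for simply connected reductive groups over regular two-dimensional noetherian schemes is classical in several special cases, but pinning down a reference in the generality needed here may require some care. Should the direct purity argument be unavailable, a natural alternative is to follow the completion-and-patching strategy of the proof of Theorem~\ref{th:main}: apply Theorem~\ref{th:main} to the Dedekind ring $L[x]_t$ to obtain triviality of $E$ over $A[x]_{ta}$ for some nonzero $a \in A$, complete $A$ at the ideal $(a)$ to obtain a product of complete DVRs $\hat A_i$, verify triviality of $E$ over each $\hat A_i[x]_t$ using Lemma~\ref{lem:A[x]} combined with a Gille-type decomposition $G(\hat A_i[x]_{ta}) = G(\hat A_i[x]_t) \cdot E_P(\hat A_i[x]_{ta})$, and patch through the Cartesian square relating $A[x]_t$, $A[x]_{ta}$, $\hat A[x]_t$, and $\hat A[x]_{ta}$.
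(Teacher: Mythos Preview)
Your overall plan---reduce to a single localization $A[x]_f$, extend the torsor to all of $A[x]$, then finish with Lemma~\ref{lem:A[x]}---is exactly the paper's. The gap is the extension step: the codimension-one purity statement you invoke (extending a $G$-torsor across a divisor on a regular $2$-dimensional scheme once it is generically trivial) is not available in this generality, and proving it would be essentially as hard as the lemma itself. Your alternative completion-and-patching route is also not worked out; the decomposition $G(\hat A_i[x]_{ta})=G(\hat A_i[x]_t)E_P(\hat A_i[x]_{ta})$ you would need is not an instance of~\cite[Lemme 4.5]{Gil} or~\cite[Lemma 3.2]{PaSt}.

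The paper replaces purity by an elementary trick. Instead of localizing at monic polynomials, it localizes at $U=1+fA[x]$. Every maximal ideal of $A[x]$ not containing $f$ meets $U$, so $(A[x]_U)_f$ has dimension $\le 1$ and is therefore Dedekind; Theorem~\ref{th:main} then produces an $h\in U$ with $E_h$ trivial. The point is that $h\in 1+fA[x]$ is automatically comaximal with $f$, so the Zariski square on $A[x]$, $A[x]_f$, $A[x]_h$, $A[x]_{fh}$ is a genuine patching square, and one glues $E$ with the trivial torsor over $A[x]_h$ to obtain $E'$ over $A[x]$. Your Nagata-ring localization gives a monic $f$ with no coprimality to $t$, which is why you are forced into purity.

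For the endgame, the paper does \emph{not} claim that Lemma~\ref{lem:A[x]} holds verbatim over a general Dedekind $A$ (that would require checking that~\cite[Theorem~1.7]{PSt2} applies beyond the DVR case). Instead, from $E'_{L(x)}$ trivial it deduces $E'_{L[x]}$ trivial via~\cite[Prop.~2.2]{CTO}, hence $E'|_{x=0}$ is trivial over $L$ and then over $A$ by Theorem~\ref{th:main}; with a section at $x=0$ in hand, the Quillen local--global principle~\cite[Theorem~3.2.5]{AHW} reduces to $A_m[x]$ for maximal $m$, where Lemma~\ref{lem:A[x]} applies as stated.
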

\begin{proof}
Since a $G$-torsor $E$ over $A[x]_S$ is finitely presented, it follows that any such torsor
is defined over $A[x]_f$ for a non-zero polynomial $f(x)\in A[x]$. Therefore, to prove the lemma, it is
enough to show that the map $H^1_{\et}(A[x]_f,G)\to H^1_{\et}(L(x),G)$ has trivial kernel.

Set $U=1+fA[x]\subseteq A[x]$.
Then $(A[x]_U)_f$ is a regular ring of dimension $\le 1$,
hence a Dedekind domain. Indeed, let $m$ be a maximal ideal of $A[x]$ not containing $f$. Since $(f,m)=A[x]$,
it follows that $m$ contains an element of $U$. Therefore, no maximal ideal of $A[x]$ survives in $A[x]_{Uf}$.

By Theorem~\ref{th:main} $H^1_{\et}((A[x]_U)_f,G)\to H^1_{\et}(L(x),G)$ has trivial kernel.
Let $E$ be a $G$-torsor defined over $A[x]_f$ and trivial over $L(x)$.
Hence there is $h(x)=1+f(x)g(x)\in U$ such that $E_h$ is trivial. Consider the commutative square
\begin{equation*}
\xymatrix@R=15pt@C=20pt{
A[x]\ar[r]\ar[d]&A[x]_{f}\ar[d]\\
A[x]_h\ar[r]&A[x]_{fh}\\
}
\end{equation*}
This is a patching diagram for torsors. The torsor $E$ is trivial over $A[x]_{fh}$. Then it can be extended
to a $G$-torsor $E'$ over $A[x]$ such that $E'_h$ is trivial. In particular, $E'$ is trivial over $L(x)$.
By~\cite[Prop. 2.2]{CTO} it follows that $E'_{L[x]}$ is trivial, and hence $(E'_{L[x]})|_{x=0}$ is trivial.
Then by Theorem~\ref{th:main} it follows that $E'|_{x=0}$ is trivial.
Then by by a generalization of Quillen's local-global principle~\cite[Theorem 3.2.5]{AHW}, to show that $E'$ is trivial over $A[x]$, it is enough to show that it is trivial over $A_m[x]$ for all
maximal ideals $m$ of $A$.
By Lemma~\ref{lem:A[x]} the kernel of $H^1_{\et}(A_m[x],G)\to H^1_{\et}(L[x],G)$ is trivial.
This proves the claim.
\end{proof}

\begin{proof}[Proof of Theorem~\ref{th:3dim}]
Let $\pi\in D$ be a prime element.
Let $E$ be a $G$-torsor over $R_f$.
Since $E$ trivializes over $L$,
there is $h\in R$ such that $E_{hf}$ is a trivial $G$-torsor. We can assume without loss of generality
that $h$ and $f$ have no common factors and $h\in m^2$, where $m$ is the maximal ideal of $R$.
Moreover, we can assume that $\pi$ does not divide $h$. Indeed,
if $\pi$ divides $f$, then it is a common factor of $f$ and $h$, so $\pi$ does not divide $f$.
Then by the Grothendieck-Serre
conjecture for discrete valuation rings there is $h\in R_f$ not contained in $(\pi)$ such that $E_h$ is trivial.

Since $D$ is of mixed characteristic, it is excellent. Then  by Dutta's version of Lindel's lemma~\cite[Theorem 1.3]{Dut}
there exists a regular local subring $(B,n)$ of $(R,m)$, with
$B/n=R/m$, and such that
\begin{enumerate}
\item
$B$ is a localization of a polynomial ring $W[x_1,x_2]$ at a maximal ideal
 $(\pi,\phi(x_1 ),x_2)$, where $x_2=f$, $\phi$ is a monic irreducible polynomial
in $W[x_1]$ and
$(W,(\pi))$ is a subring $R$, isomorphic to $D[y_1,\ldots,y_l]_{(\pi)}$ for some $l\ge 0$ (
in particular, $W$ is a dvr);
\item $R$ is a localization of an \'etale $B$-algebra;
\item There exists an element $b\in B\cap hR$ such that $B/bB\cong R/hR$ is an isomorphism. Furthermore $hR = bR$.
\end{enumerate}
Then $B_{x_2}/b\cong R_f/h$ and we have a patching square for $G$-torsors
\begin{equation*}
\xymatrix@R=15pt@C=20pt{
B_{x_2}\ar[r]\ar[d]&R_f\ar[d]\\
B_{x_2b}\ar[r]&R_{fh}\\
}
\end{equation*}
Consequently, there is a $G$-torsor over $B_{x_2}=\bigl(W[x_1,x_2]_{(\pi,\phi(x_1 ),x_2)}\bigr)_{x_2}$,
trivial over $B_{x_2b}$, that induces $E$. Denote this $G$-torsor
by the same letter $E$. It is enough to show that the $G$-torsor $E$ over $B_{x_2}$ is trivial.

Since the ideal $(\pi,\phi(x_1),x_2)\subseteq W[x_1,x_2]$ intersects $W[x_2]$ by the ideal $(\pi,x_2)$, the ring
$W[x_2]_{(\pi,x_2)}[x_1]$ is a subring of $B=W[x_1,x_2]_{(\pi,\phi(x_1 ),x_2)}$. Consequently, the ring
$X=(W[x_2]_{(\pi,x_2)})_{x_2}[x_1]$ is a subring of $B_{x_2}$. Since $W[x_1,x_2]\subseteq X\subseteq B_{x_2}$
and $B_{x_2}$ is a localization of $W[x_1,x_2]$, it follows that $B_{x_2}=X_S$ is also a localization of $X$
at a multiplicatively closed subset $S\subseteq X$.
Therefore, $B_{x_2}=X_S=A[x_1]_S$,
where $A=(W[x_2]_{(\pi,x_2)})_{x_2}$ is a Dedekind domain. Since the torsor $E$ over $B_{x_2}$ is trivial
over $B_{x_2b}$, we conclude by Lemma~\ref{lem:x2dim} that $E$ is trivial.
\end{proof}

\renewcommand{\refname}{References}

\end{document}